\documentclass[11pt, onesided, reqno]{amsart}

\usepackage{graphicx}
\usepackage{amsmath,amsthm}
\usepackage{amsfonts,amssymb}
\usepackage{dsfont}

\usepackage{enumerate}

\newtheorem{theorem}{Theorem}

\newtheorem{lemma}[theorem]{Lemma}

\theoremstyle{definition}

\newtheorem{remark}[theorem]{Remark}

\newcommand{\E}{\mathbb{E}}

\newcommand{\N}{\mathbb{N}}

\newcommand{\R}{\mathbb{R}}
\newcommand{\Pb}{\mathbb{P}}

\newcommand{\equi}{\mathop{\sim}\limits}
\def\={{\;\mathop{=}\limits^{\text{(law)}}\;}}

\addtolength{\hoffset}{-2cm} \addtolength{\textwidth}{4cm}
\addtolength{\voffset}{-1cm} \addtolength{\textheight}{1cm}

\catcode`,\active

\catcode`\,12

\allowdisplaybreaks

\begin{document}

\title{ On the supremum of products of symmetric stable processes}

\author[Christophe Profeta]{Christophe Profeta}

\address{Laboratoire de Math\'ematiques et Mod\'elisation d'\'Evry, Universit\'e d'Evry-Val d'Essonne, B\^atiment IBGBI, 23 Boulevard de France, 91037 Evry Cedex, France. {\em Email}: {\tt christophe.profeta@univ-evry.fr}}

\keywords{Persistence probability - Stable processes}

\subjclass[2010]{60G52, 60J65}

\begin{abstract} We study the asymptotics, for small and large values, of the supremum of a product of symmetric stable processes. We show in particular that the persistence exponent remains the same as for only one process, up to some logarithmic terms.
\end{abstract}

\maketitle

\section{Introduction}
For $n\in \N$, let $(Z^{(i)}, \, 1\leq i\leq n)$ be independent symmetric $\alpha$-stable L\'evy processes with $\alpha \in (0,2]$. 
In this short note, we are interested in the study of the random variable 
$$\mathcal{S}_n=\sup_{0\leq u\leq 1}  \prod_{i=1}^n Z_u^{(i)}.$$
Except when $n=1$, in which case the double Laplace transform of $\mathcal{S}_1$ is classically given by  fluctuation theory (see for instance Bertoin \cite[p.174]{Ber}),  it does not seem evident to compute explicitly the law of $\mathcal{S}_n$, and we shall rather study its asymptotics $\Pb\left(\mathcal{S}_n\geq x\right)$ as $x\rightarrow +\infty$ and $\Pb\left(\mathcal{S}_n\leq \varepsilon\right)$ as $\varepsilon\rightarrow 0$.\\

\noindent
Most of the paper is devoted to the computation of the limit as $\varepsilon\rightarrow 0$, which is  known as a persistence problem, see the surveys \cite{AS, BMS}. By scaling, this amounts to the study of the first entrance time of the $n$-dimensional stable process $(Z^{(i)},  \, 1\leq i\leq n)$ into the  "hyperbolic" domain $\mathcal{H}_n=\{(z_1, \ldots, z_n)\in \R^n,\;  \prod_{i=1}^n z_i \geq 1\}$ : 
$$\Pb\left(\mathcal{S}_n\leq \varepsilon \right) = \Pb\left(R_n> \frac{1}{\varepsilon^{\frac{\alpha}{n}}} \right)\qquad \text{where }\quad R_n=\inf\left\{u\geq0,\,  \prod_{i=1}^n Z_u^{(i)}\geq 1\right\}.$$
There are several papers in the literature dealing with entrance and exit times of symmetric stable processes, mainly for three families of domains :  cones and wedges (Ba\~nuelos and Bogdan \cite{BBCon}, M\'endez-Hern\'andez \cite{MHCon}), parabolic domains (Ba\~nuelos and Bogdan \cite{BBPar}) and unbounded convex domains (M\'endez-Hern\'andez \cite{MHUnb}).
Here, since the domain $\mathcal{H}_n$ is non-connected, not much is known regarding $R_n$ and we shall tackle the problem directly by working with $\mathcal{S}_n$.\\

\noindent
We start with the Brownian case, i.e. $\alpha=2$.
\begin{theorem}\label{theo:BM}
Let $(W^{(i)}, \, 1\leq i\leq n)$ be independent Brownian motions.  There exist two constants $0<\kappa_1 \leq \kappa_2 <+\infty$ such that the following estimates hold.
\begin{enumerate}
\item Large deviations :
$$\kappa_1\, x^{-\frac{1}{n}}  \exp\left(-\frac{n}{2}x^{\frac{2}{n}}\right) \,  \leq \Pb\left(\sup_{0\leq u\leq 1} \prod_{i=1}^n W_u^{(i)} \geq x\right)\leq \kappa_2\,x^{-\frac{1}{n}}  \exp\left(-\frac{n}{2}x^{\frac{2}{n}}\right)\, \qquad (x\rightarrow +\infty)$$
\item Persistence probability :
$$\kappa_1\, \varepsilon \leq \Pb\left(\sup_{0\leq u\leq 1} \prod_{i=1}^n W_u^{(i)} \leq \varepsilon\right) \leq \kappa_2\, \varepsilon \left|\ln(\varepsilon)\right|^{n}\qquad (\varepsilon\rightarrow 0)$$
\end{enumerate}
\end{theorem}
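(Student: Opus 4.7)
The proof splits into the two items. For the large-deviation estimate (item~1), I would exploit the simple sandwich
\[
\prod_{i=1}^n W_1^{(i)} \;\le\; \sup_{0\le u\le 1}\prod_{i=1}^n W_u^{(i)} \;\le\; \prod_{i=1}^n \sup_{0\le u\le 1}\bigl|W_u^{(i)}\bigr|.
\]
The right-hand side is a product of $n$ i.i.d.\ random variables whose tail is, up to an explicit constant, that of $|Z|$ for $Z$ standard Gaussian, by the reflection principle; the left-hand side is a product of $n$ i.i.d.\ standard Gaussians, up to a sign which is symmetric. Both the upper and lower bounds for $\Pb(\mathcal{S}_n\ge x)$ thus reduce to a Laplace-type analysis of
\[
\Pb\Bigl(\prod_{i=1}^n |Z_i|\ge x\Bigr) \;=\; C_n \int_{\sum y_i\ge\log x} \exp\Bigl(-\tfrac12\sum_i e^{2y_i}+\sum_i y_i\Bigr)\,dy,
\]
obtained through the substitution $z_i=e^{y_i}$. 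The minimum of $\tfrac12\sum e^{2y_i}$ on the hyperplane $\sum y_i=u$ is attained at the symmetric point $y_i=u/n$, with value $\tfrac{n}{2}e^{2u/n}$; a Gaussian integration around this point and a one-dimensional integration over $u\ge\log x$ then produce the prefactor $x^{-1/n}$.

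For the persistence lower bound (item~2, first inequality), I would split $[0,1]$ into $[0,\delta]$ and $[\delta,1]$ with $\delta=\varepsilon^{2/n}$, and construct an event of probability at least $c\varepsilon$ on which $\mathcal{S}_n\le\varepsilon$. On $[0,\delta]$, Brownian scaling turns the constraint $\sup_{u\le\delta}\prod_i W_u^{(i)}\le\varepsilon$ into $\sup_{v\le 1}\prod_i\widetilde W_v^{(i)}\le 1$, which has constant probability; one may further impose $W_\delta^{(1)}\in[-2\sqrt\delta,-\sqrt\delta]$ and $W_\delta^{(i)}\in[\sqrt\delta,2\sqrt\delta]$ for $i\ge 2$ while still keeping constant probability (positivity of this event follows from the support theorem). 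On $[\delta,1]$, conditionally on these sign constraints, I force $W^{(1)}$ to stay non-positive and $W^{(i)}$ to stay non-negative for $i\ge 2$, so that the product is automatically $\le 0\le\varepsilon$ on $[\delta,1]$. The reflection-principle estimate $\Pb(\sup_{v\le 1-\delta} B_v\le a\sqrt\delta)\sim c' a\sqrt\delta$ applied to each of the $n$ independent coordinates yields a conditional probability $\gtrsim\delta^{n/2}=\varepsilon$, and the two phases combine to produce the desired lower bound.

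The persistence upper bound is the delicate part. The naive estimate $\Pb(\mathcal{S}_n\le\varepsilon)\le\Pb(\prod_i W_1^{(i)}\le\varepsilon)$ is useless, since the right-hand side is close to $1/2$ by the sign symmetry of the $W_1^{(i)}$. My plan is instead to apply the Markov property at a small time $\delta$, writing $W_u^{(i)}=W_\delta^{(i)}+B_{u-\delta}^{(i)}$ on $[\delta,1]$. Linearising the product around the vector $(W_\delta^{(i)})_i$, the condition $\sup_{u\in[\delta,1]}\prod_i W_u^{(i)}\le\varepsilon$ essentially forces the affine combination $\sum_j\bigl(\prod_{i\ne j}W_\delta^{(i)}\bigr)B_{u-\delta}^{(j)}$ to stay below $\varepsilon-\prod_i W_\delta^{(i)}$ throughout $[0,1-\delta]$; this is a one-dimensional persistence event whose probability is, by reflection, of order $\varepsilon/\bigl|\prod_i W_\delta^{(i)}\bigr|$. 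Integrating this estimate against the joint Gaussian density of $(W_\delta^{(i)})_i$—whose image law is the density of $\prod_i|Z_i|$, behaving like $(-\ln|x|)^{n-1}$ near the origin—produces through $\int_\varepsilon^1(\ln 1/x)^{n-1}\,dx/x\sim(\ln1/\varepsilon)^n$ the announced bound $\varepsilon\bigl|\ln\varepsilon\bigr|^n$. The main obstacle lies precisely here: one must control the regime where the fluctuations $B_{u-\delta}^{(j)}$ overwhelm the "drifts" $W_\delta^{(j)}$ (so that the linearisation breaks down and the sign of the product fluctuates), and also handle atypical sign configurations of $(W_\delta^{(i)})_i$ uniformly. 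The logarithmic factor $\bigl|\ln\varepsilon\bigr|^n$ is precisely the price paid for the $n$-fold convolution of the logarithmic singularity of the density of a product of independent Gaussians at zero.
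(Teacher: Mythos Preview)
Your treatment of item~1 and of the persistence lower bound is essentially the paper's: the sandwich between $\prod_i W_1^{(i)}$ and a product of suprema, together with the reflection identity $\sup_{u\le 1}W_u\stackrel{d}{=}|W_1|$, reduces item~1 to the tail of $\prod_i|W_1^{(i)}|$; the paper invokes the Mellin transform and a converse mapping theorem where you do a direct Laplace analysis, but the substance is the same. Likewise, your two-phase construction for the lower bound (splitting at $\delta=\varepsilon^{2/n}$, imposing a sign pattern, and paying $\sqrt\delta$ per coordinate for each process to keep its sign on $[\delta,1]$) is exactly the paper's argument after undoing its scaling.

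The persistence \emph{upper bound}, however, has a genuine gap, and the paper's route is entirely different from yours. Your plan is to apply the Markov property at a time $\delta$ and linearise the product $\prod_i(W_\delta^{(i)}+B_{u-\delta}^{(i)})$ around $(W_\delta^{(i)})_i$. Two problems arise. First, the higher-order terms in this expansion are not small: on $[\delta,1]$ the increments $B_{u-\delta}^{(i)}$ have variance of order $1$, comparable to $W_\delta^{(i)}$ for any fixed $\delta$, so the linearisation is simply invalid on an event of probability bounded away from zero. Second, and more damagingly, even if one accepts the linearisation, the resulting one-dimensional persistence bound is of order $(\varepsilon-\prod_i W_\delta^{(i)})_+$ divided by a standard deviation; when $\prod_i W_\delta^{(i)}<0$ (probability $1/2$) this numerator is of order $|\prod_i W_\delta^{(i)}|$, not $\varepsilon$, and the bound degenerates to $O(1)$. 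You acknowledge both difficulties in your last sentences but do not resolve them, and they are precisely the heart of the matter: the sign of the product keeps flipping, and one must find a \emph{deterministic-in-sign} configuration on a subinterval whose length is not too small.

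The paper's solution is to avoid linearisation entirely. It symmetrises over which coordinate has the latest sign change $g_1=g_1^{(n)}$ before time $1$, restricts attention to $[0,g_1)$, and uses a time-reversal/bridge representation (Lemma~5) to replace $(g_1^{-1/\alpha}X_{ug_1})_{u\le 1}$ by a stable bridge independent of $g_1$. Slepian's inequality then converts the product $(a+X_u)\prod_{i<n}Z_{1-u}^{(i)}$ into $a\prod_{i<n}Z_{1-u}^{(i)}+X_u\prod_{i<n}|Z_{1-u}^{(i)}|$, which isolates the sign issue in a single additive term. After evaluating at the argmax $\theta_{1/2}$ of $X$ and an iterative dichotomy on $\{|Z_{1-\theta_{1/2}}^{(i)}|\lessgtr 1\}$, the estimate $\Pb_1(T_0\ge t)\asymp t^{-1/2}$ and the $r^{-1/2}$ behaviour of the density of $g_1$ near $0$ combine to give $\varepsilon\cdot\int_0^{c/\varepsilon}\Pb(\text{product}\le x^{-1})\,dx$, whose logarithmic divergence produces the factor $|\ln\varepsilon|^n$. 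None of these ingredients---last sign-change times, bridge time-reversal, Slepian---appears in your outline, and without a mechanism of this kind the sign-fluctuation obstacle you identify remains unresolved.
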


\noindent
In the non-Gaussian stable case, the situation is different. 
\begin{theorem}\label{theo:Levy}
Let $(L^{(i)}, \, 1\leq i\leq n)$ be independent  symmetric $\alpha$-stable L\'evy processes with $\alpha \in (0,2)$. There exist two constants $0<\kappa_1 \leq \kappa_2 <+\infty$ such that the following estimates hold.
\begin{enumerate}
\item Large deviations :
$$\kappa_1\, \frac{(\ln(x))^{n-1}}{x^\alpha} \leq \Pb\left(\sup_{0\leq u\leq 1} \prod_{i=1}^n L_u^{(i)} \geq x\right)\leq \kappa_2\,\frac{(\ln(x))^{n-1}}{x^\alpha}  \qquad (x\rightarrow +\infty)$$
\item Persistence probability :
$$\kappa_1\, \varepsilon^{\alpha/2} \leq \Pb\left(\sup_{0\leq u\leq 1} \prod_{i=1}^n L_u^{(i)} \leq \varepsilon\right)\leq \kappa_2\, \varepsilon^{\alpha/2}\left|\ln(\varepsilon)\right| \qquad (\varepsilon\rightarrow 0)$$
\end{enumerate}
\end{theorem}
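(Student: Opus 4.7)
\emph{Part (1): Large deviations.} I would reduce the supremum of the product to a product of individual suprema via the elementary inequality
\[
\sup_{u\leq 1}\prod_{i=1}^n L_u^{(i)} \leq \prod_{i=1}^n \sup_{u\leq 1}|L_u^{(i)}|,
\]
together with the classical tail $\Pb(\sup_{u\leq 1}|L_u^{(i)}|>x)\sim c\,x^{-\alpha}$ for symmetric stable processes (cf.\ Bertoin's book, via the ladder decomposition). A direct induction on $n$, based on the convolution of tails, shows that a product of $n$ independent positive variables each with tail $\sim c\,x^{-\alpha}$ has tail $\sim c_n(\ln x)^{n-1}x^{-\alpha}$, which gives the upper bound. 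For the lower bound I would use $\sup_{u\leq 1}\prod_i L_u^{(i)}\geq \prod_i L_1^{(i)}$ and apply the same product-of-tails asymptotic to the $n$ i.i.d.\ symmetric $\alpha$-stable random variables $L_1^{(i)}$.

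\emph{Part (2): Persistence, lower bound.} The one-dimensional input is the one-sided persistence asymptotic
\[
\Pb\Big(\sup_{u\leq 1}L_u\leq \delta\Big)\asymp \delta^{\alpha/2}\qquad(\delta\to 0),
\]
a consequence of $\Pb(\sup_{u\leq T}L_u\leq 1)\sim cT^{-1/2}$ (positivity parameter $\rho=1/2$) and self-similarity. I would construct an event of probability $\asymp \varepsilon^{\alpha/2}$ on which $\sup\prod L\leq \varepsilon$: force $L^{(1)}$ to remain in the narrow band $(-K,\varepsilon/K)$ on $[0,1]$ (with probability $\asymp \varepsilon^{\alpha/2}$ by an exit-time variant of the previous display) while each of $L^{(2)},\ldots,L^{(n)}$ stays within a fixed bounded interval. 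The delicate step is the sign analysis: the ``bad'' regime $L^{(1)}<0$ combined with $\prod_{i\geq 2}L^{(i)}<0$ makes the full product positive and potentially large. I would exclude this by further forcing, on an initial short segment $[0,h]$, the remaining coordinates to be tiny, and on $[h,1]$, to lie in a uniformly positive band; an excursion/Markov argument then guarantees that this extra restriction costs only a constant factor.

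\emph{Part (2): Persistence, upper bound --- main obstacle.} The presence of only one $|\ln \varepsilon|$ factor (in contrast with $|\ln \varepsilon|^n$ in the Brownian case) suggests that a single coordinate is responsible for the dominant persistence cost, with the others contributing logarithmic corrections. I would decompose $\{\sup\prod L\leq \varepsilon\}$ according to the dyadic scale $\delta\in[\varepsilon,1]$ of $\min_i\sup_{u\leq 1}|L^{(i)}_u|$: at scale $\delta$, one coordinate incurs persistence cost $\asymp \delta^{\alpha/2}$, while the constraint on the product of the remaining $n-1$ coordinates only costs factors polynomial in $\varepsilon/\delta$, by Part~(1)-style tail estimates applied in reverse. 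Summing these contributions over dyadic $\delta$ should yield a single logarithmic factor. The main difficulty I anticipate is rigorously decoupling the ``smallest coordinate'' event from the others while keeping the exponents sharp; this is likely to require careful conditioning and use of the Markov property at a well-chosen stopping time.
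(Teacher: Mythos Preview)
Your approach is essentially the paper's: both sandwich $\mathcal S_n$ between $\prod_i L_1^{(i)}$ and a product of one-dimensional suprema, and both invoke the same ``product of regularly varying tails picks up a $\log$'' lemma. This part is fine.

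\textbf{Part (2), lower bound.} Your construction has a genuine gap in the sign analysis. On the initial segment $[0,h]$ you want the remaining coordinates to be ``tiny'', say $|L^{(i)}_u|\le\eta$ for $i\ge 2$. If $\eta$ is a fixed constant, then whenever $L^{(1)}_u<0$ and $\prod_{i\ge2}L^{(i)}_u<0$ (which happens with positive conditional probability on $[0,h]$, since all processes start at $0$), the full product can be as large as $K\eta^{n-1}$, which is not $\le\varepsilon$. If instead you take $\eta\asymp(\varepsilon/K)^{1/(n-1)}$, the two-sided confinement $\sup_{u\le h}|L^{(i)}_u|\le\eta$ costs $\exp(-c\,h/\eta^{\alpha})$, which is far too small. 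There is no choice of fixed $h,\eta$ that closes both ends. The paper avoids this entirely: after rescaling to the window $[0,\varepsilon^{-\alpha/n}]$, it asks only that $\prod_i L^{(i)}_1\le 0$ and that \emph{none} of the coordinates changes sign on $[1,\varepsilon^{-\alpha/n}]$; each non-sign-change costs $\asymp(\varepsilon^{-\alpha/n})^{-1/2}$ by the one-sided persistence estimate, and the product of $n$ such factors gives exactly $\varepsilon^{\alpha/2}$.

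\textbf{Part (2), upper bound.} This is where your sketch diverges most from the paper, and the proposed decomposition does not work as stated. The event $\{\sup_u\prod_i L^{(i)}_u\le\varepsilon\}$ does \emph{not} force $\min_i\sup_u|L^{(i)}_u|$ to be small: for $n=2$, say, both coordinates can oscillate at order one with opposite signs at each time. Moreover, even on the event $\{\sup_u|L^{(j)}_u|\asymp\delta\}$ you would be imposing a \emph{two-sided} confinement on $L^{(j)}$, whose probability decays like $\exp(-c/\delta^{\alpha})$, not $\delta^{\alpha/2}$; so the bookkeeping ``one coordinate costs $\delta^{\alpha/2}$'' is off from the start. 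The paper's route is quite different: it orders the coordinates by their last sign-change times $g^{(i)}_1$, conditions on the largest one $g_1=g^{(n)}_1$, and replaces $(L^{(n)}_{ug_1}/g_1^{1/\alpha})_{u\le1}$ by a stable bridge (Lemma~\ref{lem}). Time-reversing the bridge produces a process starting at a positive value $a$, Slepian's inequality lets one replace the signed product $\prod_{i<n}Z^{(i)}$ by $\prod_{i<n}|Z^{(i)}|$, and evaluating at the argmax $\theta_{1/2}$ of the reversed bridge yields a tractable functional. The iteration that ultimately produces the single $|\ln\varepsilon|$ comes from peeling off one coordinate at a time via the dichotomy $|Z^{(i)}_{1-\theta_{1/2}}|\lessgtr 1$, each step contributing a factor $\sqrt r$ from the persistence of the sign of $Z^{(i)}$ after time $1$. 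Your dyadic decomposition does not see this mechanism; to make the upper bound work you would need an entirely different way of locating a ``good'' time at which one coordinate is large while the others are controllable, and the paper's bridge/time-reversal device is precisely such a tool.
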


\begin{remark}
The presence of extra logarithmic terms in the persistence probability of Brownian motion is due to some  additive phenomenons. Indeed, recall the estimates (see Bertoin \cite[p.219]{Ber}) :
\begin{equation}\label{eq:asymp}
\Pb(|Z_1 |\leq \varepsilon) \equi_{\varepsilon\rightarrow 0} k\, \varepsilon\qquad \text{and}\quad   \Pb\left(\sup_{0\leq u\leq 1}Z_u \leq \varepsilon\right) \equi_{\varepsilon\rightarrow 0} c\, \varepsilon^{\frac{\alpha}{2}}
\end{equation}
for some positive constants $k$ and $c$. When $\alpha<2$, the second asymptotics is the leading one, while for $\alpha=2$, 
they are of the same order, and some compensations appear, see Lemma \ref{lem:XY}. In fact, the heuristic below leads us to believe that the right asymptotics in the Brownian case should be $\varepsilon \left|\ln(\varepsilon)\right|^{n-1}$.
\end{remark}

\bigskip

The main part of the proof deals with the computation of an upper bound for the persistence probabilities. A simple approach would be to try to bound the quantity $\mathcal{S}_n$ by $\prod_{i=1}^n Z_{\theta_1}^{(i)}$ where $\theta_1$ is the value at which one of the L\'evy processes, say $Z^{(n)}$, reach its maximum on $[0,1]$. This yields of course two main difficulties.
\begin{enumerate}[$i)$]
\item First, the product of the other processes $\prod_{i=1}^{n-1} Z_{\theta_1}^{(i)}$ might not be positive. This can be however easily circumvented thanks to Slepian's inequality, since the processes are symmetric.
\item The second difficulty is less obvious and is due to the arcsine law for stable processes. There is a high probability that $\theta_1$ will be close to 0, hence, although $Z_{\theta_1}^{(n)}$ will be large, the remaining product $\prod_{i=1}^{n-1} Z_{\theta_1}^{(i)}$ will also be close to zero, thus not providing us with a good upper bound.
\end{enumerate}
The general idea of the proof will be to decompose the path of the processes $(Z^{(i)})$ at some last passage times and then use a time-reversal argument, so as to find a value not to close to the origin at which $Z^{(n)}$ is large enough. \\

\noindent
The outline of the paper is as follows : the large deviation results are proved in Section 2, the persistence probabilities in Section 3, and finally Section 4 provides the proof of an intermediary lemma.

\section{Large deviations}
The proof of the large deviation results relies on the symmetry of the processes $(Z^{(i)})$, and on the fact that the asymptotics of both random variables $|Z_1|$ and $\sup\limits_{0\leq u\leq 1} Z_u$ are similar. Indeed,  on the one hand, the lower bound is easily given by :  
$$\Pb(\mathcal{S}_n\geq x) \geq \Pb\left(\prod_{i=1}^n Z_1^{(i)}\geq x\right)=\frac{1}{2}\Pb\left(\prod_{i=1}^n |Z_1^{(i)}|\geq x\right).$$
On the other hand, still by symmetry, 
\begin{align*}
\Pb(\mathcal{S}_n\geq x)& \leq \Pb\left(\sup_{0\leq u\leq 1} Z_u^{(1)} \sup_{0\leq s\leq 1} \prod_{i=1}^{n-1} Z_s^{(i)} \geq x \right) +  \Pb\left(\inf_{0\leq u\leq 1} Z_u^{(1)} \inf_{0\leq s\leq 1} \prod_{i=1}^{n-1} Z_s^{(i)} \geq x \right) \\
&\leq 2\, \Pb\left(\sup_{0\leq u\leq 1} Z_u^{(1)} \sup_{0\leq s\leq 1} \prod_{i=1}^{n-1} Z_s^{(i)} \geq x \right) \\
&\leq 2^n\,  \Pb\left(\prod_{i=1}^n \sup_{0\leq u\leq 1} Z_u^{(i)}  \geq x \right) \qquad \qquad\text{(by iteration).}
\end{align*}
It remains thus to compute the involved quantities in both cases.\\

$\rightarrow$ In the Brownian case, since $\displaystyle \sup_{0\leq u\leq 1} W_u \= |W_1|$, we deduce that the asymptotics of $\mathcal{S}_n$ is given by that of $\prod_{i=1}^n |W_1^{(i)}|$. Its Mellin transform reads, for $\nu>-1$  :
\begin{equation}\label{eq:prodBM}
\E\left[\prod_{i=1}^n |W_1^{(i)}|^\nu  \right] = \left(\frac{2^\nu}{\pi}\right)^{\frac{n}{2}}\left(\Gamma\left(\frac{1+\nu}{2}\right)\right)^n. 
\end{equation}
The converse mapping theorem, see Janson \cite[Theorem 6.1]{Jan}, yields :
$$\Pb\left(\prod_{i=1}^n |W_1^{(i)}| \in dx\right)/dx \equi_{x\rightarrow +\infty} \kappa\, x^{\frac{1}{n}-1}\, e^{-\frac{n}{2}x^{\frac{2}{n}}}$$
for some positive constant $\kappa$. The result then follows by integration, using the asymptotics of the incomplete Gamma function.\\

$\rightarrow$ Next, when $\alpha\in (0,2)$,  it is known from Bertoin \cite[p.221]{Ber} that there exists $k>0$ such that 
$$\Pb(|L_1|\geq x) \equi_{x\rightarrow+\infty} \frac{2k}{x^{\alpha}}\qquad \text{ and } \qquad \Pb\left(\sup_{0\leq u\leq 1} L_u\geq x\right) \equi_{x\rightarrow+\infty}  \frac{k}{x^{\alpha}}.$$
Point 1. of Theorem \ref{theo:Levy} is then consequence of the following lemma (see for instance  Lemma 2  in Profeta-Simon \cite{PSWind}):
\begin{lemma}\label{lem:XY}
Let $X$ and $Y$ be two independent positive random variables satisfying the asymptotics :
$$\Pb(X\geq z)\equi_{z\rightarrow +\infty} a\frac{(\ln(z))^n}{z^\nu} \qquad \text{and}\qquad \Pb(Y\geq z)\equi_{z\rightarrow +\infty} b\frac{(\ln(z))^p}{z^\mu}$$
where $n,p\in \N$ and $a,b, \nu,  \mu$ are positive constants such that $0<\nu\leq \mu$. Then there exists $c>0$ such that :
$$\Pb(XY\geq z) \equi_{z\rightarrow +\infty} \begin{cases}
c\, z^{-\nu} (\ln(z))^n & \text{if } \nu<\mu\\
c\, z^{-\nu} (\ln(z))^{n+p+1}& \text{if }\nu=\mu.
\end{cases}$$
\qed
\end{lemma}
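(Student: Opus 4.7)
The plan is to analyze $\Pb(XY\geq z)$ via its convolution representation
$$\Pb(XY\geq z)=\int_0^\infty \Pb(Y\geq z/x)\,\Pb(X\in dx),$$
or equivalently, setting $U=\ln X$, $V=\ln Y$ and $w=\ln z$, to estimate $\Pb(U+V\geq w)$ as $w\to+\infty$, where $U$ and $V$ have polynomial--exponential tails $\Pb(U\geq u)\sim a\, u^n e^{-\nu u}$ and $\Pb(V\geq v)\sim b\, v^p e^{-\mu v}$. In both cases a splitting of the integration range, together with the tail asymptotics, is the main tool; the two cases $\nu<\mu$ and $\nu=\mu$ require different treatments.

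When $\nu<\mu$, the variable $X$ has the heavier tail and a Breiman-type argument applies. Indeed, $\overline{F_X}(z)=a z^{-\nu}(\ln z)^n(1+o(1))$ is regularly varying at infinity with index $-\nu$, while the tail of $Y$ decays like $z^{-\mu}$ up to a slowly varying factor, so $\E[Y^{\nu+\epsilon}]<+\infty$ for any $0<\epsilon<\mu-\nu$. Breiman's lemma then yields
$$\Pb(XY\geq z)\sim \E[Y^\nu]\,\overline{F_X}(z)\sim a\,\E[Y^\nu]\, z^{-\nu}(\ln z)^n,$$
identifying the constant $c=a\,\E[Y^\nu]>0$.

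When $\nu=\mu$, both tails contribute at the same exponential rate on the logarithmic scale and Breiman no longer suffices. Replacing $\Pb(V\in dv)$ by its asymptotic density $b\nu\, v^p e^{-\nu v}\,dv$ and $\Pb(U\geq w-v)$ by its asymptotic equivalent $a(w-v)^n e^{-\nu(w-v)}$, the leading contribution to $\Pb(U+V\geq w)$ becomes
$$ab\nu\, e^{-\nu w}\int_0^w (w-v)^n v^p\,dv = ab\nu\, B(n+1,p+1)\, w^{n+p+1}\, e^{-\nu w},$$
where the last equality follows by the substitution $v=wt$. Reverting to $z=e^w$ gives $\Pb(XY\geq z)\sim c\,z^{-\nu}(\ln z)^{n+p+1}$ with $c=ab\nu\, B(n+1,p+1)>0$.

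The main obstacle is to rigorously justify these replacements: the asymptotic density of $V$ and the tail asymptotics of $U$ only hold in the bulk of the integration range, so one must control the contributions from the boundary regions $v\in[0,A]$ and $v\in[w-A,w]$ for large but fixed $A$, as well as the region $v\geq w$ where the upper tail of $U+V$ still receives mass. This is achieved by a standard three-way split of the integral, after which the intermediate piece is evaluated exactly (as above) and the two boundary pieces are shown to be of strictly smaller order; the analogous argument in the case $\nu<\mu$ provides an alternative, more self-contained route than invoking Breiman.
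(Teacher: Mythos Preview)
The paper does not actually prove this lemma: it is stated with a citation to Lemma~2 of \cite{PSWind} and closed with a \qed. So there is no in-paper argument to compare against, and your proposal stands as a self-contained proof.

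Your strategy is correct and even identifies the constants. For $\nu<\mu$, invoking Breiman's lemma is the cleanest route and gives $c=a\,\E[Y^\nu]$. For $\nu=\mu$, the logarithmic substitution and the Beta-integral computation give $c=ab\nu\,B(n+1,p+1)$, which is right.

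One point needs tightening. In the case $\nu=\mu$ you replace $\Pb(V\in dv)$ by an ``asymptotic density'' $b\nu\,v^p e^{-\nu v}\,dv$, but the hypotheses only give tail asymptotics; $V=\ln Y$ need not have a density at all, and even if it does, density asymptotics do not follow from tail asymptotics without further assumptions. The fix is to integrate by parts against the tail $\overline{F_V}(v)=\Pb(V\geq v)$ on the middle block $[A,w-A]$: with $g(v)=(w-v)^n e^{-\nu(w-v)}$,
\[
\int_A^{w-A} g(v)\,\Pb(V\in dv)
= -\bigl[g(v)\,\overline{F_V}(v)\bigr]_A^{w-A} + \int_A^{w-A} g'(v)\,\overline{F_V}(v)\,dv.
\]
Both boundary terms are $O(w^{\max(n,p)}e^{-\nu w})$, while the dominant part of $g'(v)$ is $\nu(w-v)^n e^{-\nu(w-v)}$; substituting $\overline{F_V}(v)\sim b\,v^p e^{-\nu v}$ in the integral then produces exactly $b\nu\,e^{-\nu w}\int_A^{w-A}(w-v)^n v^p\,dv$, and the rest of your argument goes through. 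Also, since $V=\ln Y$ can be negative, the left boundary region should be $(-\infty,A]$ rather than $[0,A]$; the bound there is the same, $O(w^n e^{-\nu w})$. With these adjustments the proof is complete.
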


\section{Persistence probabilities}

We now turn our attention to the persistence estimates and start with some notations.  Let $X$ be a symmetric stable process. We denote by $\Pb_x$ the probability measure of $X$ when started from $x\in \R$, with the usual convention that $\Pb=\Pb_0$. Let $T_0$ be the first time that $X$ takes a negative value :
$$T_0=\inf\{t\geq0, X_t\leq 0\}.$$
We recall from Bertoin \cite[p.219]{Ber} that since $X$  is symmetric, there exists $c>0$ such that 
\begin{equation}\label{eq:limT0}
\Pb_{1}\left(T_{0} \geq t \right) \equi_{t\rightarrow +\infty}  \frac{c}{\sqrt{t}}.
\end{equation}
Finally, let  us introduce the last change of sign of $X$ before time $t>0$ : 
$$g_t = \sup\left\{0\leq u\leq t,\; X_u X_{u^-}\leq 0\right\}.$$
This random time will be the key to the computation of the persistence probabilities. 
\begin{remark}
In the following, when applying the Markov property, $\widehat{X}$ will always denote an independent copy of $X$. Besides, we shall use the notations $c$ and $\kappa$ to denote positive constants that may change from line to line.
\end{remark}

We first show that the asymptotics of the distribution of $g_1$ is similar to that of the arcsine law.
\begin{lemma}\label{lem:g}
There exists a positive constant $c$ such that 
$$\Pb(g_1\in dr) /dr \equi_{r\rightarrow 0} \frac{c}{\sqrt{r}}. $$
\end{lemma}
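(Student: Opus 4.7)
The plan is to combine the Markov property applied at time $r$, the self-similarity of $X$, and the tail estimate \eqref{eq:limT0}. The event $\{g_1\leq r\}$ precisely says that $X$ undergoes no sign change on the interval $(r,1]$; conditioning on $X_r$ and using the symmetry of $X$ (which allows replacing a negative value of $X_r$ by its absolute value after reflecting the post-$r$ path) yields
$$\Pb(g_1\leq r) \;=\; \E\!\left[\Pb_{|X_r|}(T_0>1-r)\right].$$
By the scaling identities $X_r \stackrel{d}{=} r^{1/\alpha}X_1$ and $\Pb_y(T_0>t)=\Pb_1(T_0>t/y^{\alpha})$ for $y>0$, this rewrites as
$$\Pb(g_1\leq r) \;=\; \E\!\left[\Pb_1\!\left(T_0>\tfrac{1-r}{r|X_1|^{\alpha}}\right)\right].$$

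Next, \eqref{eq:limT0} supplies both the equivalence $\Pb_1(T_0>t)\sim c/\sqrt{t}$ and the uniform bound $\Pb_1(T_0>t)\leq C\min(1,t^{-1/2})$. Since the tail of $|X_1|$ decays like $x^{-\alpha}$, the moment $\E[|X_1|^{\alpha/2}]$ is finite, and dominated convergence gives the integrated version of the claim:
$$\Pb(g_1\leq r)\;\sim\; c\,\sqrt{r}\;\E[|X_1|^{\alpha/2}]\qquad(r\to 0).$$

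The genuine difficulty, and the main obstacle, is to upgrade this CDF asymptotic into the pointwise density asymptotic $\Pb(g_1\in dr)/dr\sim c/\sqrt{r}$. One natural route is a last-exit decomposition of the path at $g_1$: writing $\Pb(g_1\in dr,\,X_r\in dx)$ in the form $\mu(dr,dx)\,\Pb_{|x|}(T_0>1-r)$ for a suitable $\sigma$-finite entrance/excursion measure $\mu$, and exploiting the self-similarity of $\mu$ (which forces a scaling of the type $\mu(dr,dx)=r^{-1-1/\alpha}\mu_1(dx/r^{1/\alpha})\,dr$), so that the same tail input as above produces the density together with its $r^{-1/2}$ behaviour. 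An alternative, more in line with the steps above, is to differentiate under the expectation in the scaled identity, provided one can complement \eqref{eq:limT0} with a matching derivative asymptotic of the form $\tfrac{d}{dt}\Pb_1(T_0>t)\sim -c/(2t^{3/2})$ together with an integrable dominating bound. Either way, passing from the antiderivative to the pointwise density is where the analytic content of the lemma sits, and is postponed to Section~4.
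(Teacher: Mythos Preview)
Your first two displayed formulae match the paper's proof exactly: Markov at time $r$, symmetry, and scaling give
\[
\Pb(g_1\leq r)=\E\!\left[\Pb_1\!\left(T_0\geq \frac{1-r}{r|X_1|^{\alpha}}\right)\right],
\]
and your ``alternative'' route---differentiating under the expectation using a derivative asymptotic $-\frac{d}{dt}\Pb_1(T_0>t)\sim \kappa\,t^{-3/2}$---is precisely what the paper does. So the strategy is right.

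The gap is that you stop at the CDF asymptotic and postpone the density step, whereas the missing ingredient is available off the shelf. Doney--Savov \cite{DS} shows that under $\Pb_1$ the random variable $T_0$ has a continuous density $h$ satisfying $h(z)\sim \kappa\,z^{-3/2}$ as $z\to\infty$. Differentiating the scaled identity in $r$ then gives
\[
\Pb(g_1\in dr)/dr=\frac{1}{r^2}\,\E\!\left[\frac{1}{|X_1|^{\alpha}}\,h\!\left(\frac{1-r}{r|X_1|^{\alpha}}\right)\right]
\;\equi_{r\to 0}\;\frac{\kappa}{\sqrt{r}}\,\E\!\left[|X_1|^{\alpha/2}\right],
\]
which is the lemma. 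In other words, the ``matching derivative asymptotic together with an integrable dominating bound'' that you flag as needed is exactly the content of \cite{DS}; once you cite it, the proof is complete in three lines and there is nothing to postpone. (Your reference to Section~4 is also misplaced: in the paper that section proves the bridge Lemma~\ref{lem}, not this one.) Your first proposed route via a last-exit/excursion decomposition would also work in principle, but it is heavier and unnecessary here.
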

\begin{proof}
We first have, using the symmetry of $X$ and applying the Markov property with $r\in(0,1)$ :
$$
\Pb(g_1\leq r) = \E\left[  \widehat{\Pb}_{|X_r|}\left(\widehat{T}_0\geq 1-r\right)\right].
$$
By scaling, this is further equal to 
$$
\Pb(g_1\leq r) =\E\left[  \widehat{\Pb}_{1}\left(\widehat{T}_0\geq \frac{1-r}{r|X_1|^{\alpha}}\right)\right].
$$
Recall now from Doney-Savov \cite{DS} that under $\Pb_1$, the random variable $T_0$ admits a continuous density $h$ satisfying $h(z) \equi_{z\rightarrow +\infty} \kappa\, z^{-3/2}$ for some constant $\kappa>0$. Therefore, differentiating, we deduce that
$$\Pb(g_1\in dr) / dr = \frac{1}{r^2}\, \E\left[ \frac{1}{|X_1|^{\alpha}} h\left(  \frac{1-r}{r |X_1|^{\alpha}} \right)  \right] \equi_{r\rightarrow 0} \frac{\kappa}{\sqrt{r}}\, \E\left[ |X_1|^{\frac{\alpha}{2}}\right]
 $$
which is the announced result.
\end{proof}

\subsection{Lower bound for the persistence probabilities}
Observe first that by scaling 
\begin{align*}
\Pb(\mathcal{S}_n\leq \varepsilon) &= \Pb\left( \sup_{u\in[0, \varepsilon^{-\alpha/n}]}\prod_{i=1}^n Z_{u}^{(i)}  \leq 1\right)\\
&\geq \Pb\left( \sup_{u\in[0, \varepsilon^{-\alpha/n}]}\prod_{i=1}^n Z_{u}^{(i)}  \leq 1, \,\prod_{i=1}^n Z_{1}^{(i)}\leq 0, \, \sup_{1\leq i\leq n} g^{(i)}_{1/\varepsilon^{\frac{\alpha}{n}}} \leq 1  \right)\\
&= \Pb\left( \sup_{u\in[0, 1]}\prod_{i=1}^n Z_{u}^{(i)}  \leq 1, \,\prod_{i=1}^n Z_{1}^{(i)}\leq 0, \, \sup_{1\leq i\leq n} g^{(i)}_{1/\varepsilon^{\frac{\alpha}{n}}} \leq 1  \right)
\end{align*}
where the last equality follows from the fact that, by definition of the $(g^{(i)})$, the product $\prod_{i=1}^n Z^{(i)}$ remains negative after time 1. We now apply the Markov property at time 1 :
\begin{align}
\notag\Pb(\mathcal{S}_n\leq \varepsilon)& \geq  \E\left[ \prod_{i=1}^n\widehat{\Pb}_{|Z_1^{(i)}|}\left(\widehat{T}_0^{(i)}\geq \frac{1}{\varepsilon^{\frac{\alpha}{n}}}-1\right),
\sup_{u\in[0, 1]}\prod_{i=1}^n Z_{u}^{(i)}  \leq 1, \,\prod_{i=1}^n Z_{1}^{(i)}\leq 0  \right]\\
\label{eq:low}&\geq \E\left[ \prod_{i=1}^n\widehat{\Pb}_{1}\left( \widehat{T}_0^{(i)}\geq \frac{1}{\varepsilon^{\frac{\alpha}{n}}|Z_1^{(i)}|^\alpha} \right),
\sup_{u\in[0, 1]}\prod_{i=1}^n Z_{u}^{(i)}  \leq 1, \,\prod_{i=1}^n Z_{1}^{(i)}\leq 0  \right].
\end{align}
From (\ref{eq:limT0}), there exists $\kappa>0$ such that for $\delta>0$ small enough 
$$\widehat{\Pb}_{1}\left( \widehat{T}_0^{(i)}\geq \frac{1}{\varepsilon^{\frac{\alpha}{n}}|Z_1^{(i)}|^\alpha} \right)\mathds{1}_{\{\varepsilon^{\frac{\alpha}{n}}|Z_1^{(i)}|^\alpha\leq \delta\}} \geq \kappa\, \varepsilon^{\frac{\alpha}{2n}}|Z_1^{(i)}|^{\frac{\alpha}{2}}\mathds{1}_{\{\varepsilon^{\frac{\alpha}{n}}|Z_1^{(i)}|^\alpha\leq \delta\}}.$$
Plugging this inequality in (\ref{eq:low}), we deduce that 
\begin{align*}
\notag\Pb(\mathcal{S}_n\leq \varepsilon)&\geq   \kappa^n\, \varepsilon^{\frac{\alpha}{2}} \, \E\left[ \prod_{i=1}^n  |Z_1^{(i)}|^{\frac{\alpha}{2}}\mathds{1}_{\{\varepsilon^{\frac{\alpha}{n}}|Z_1^{(i)}|^\alpha\leq \delta\}},\,  \sup_{u\in[0, 1]}\prod_{i=1}^n Z_{u}^{(i)}  \leq 1, \,\prod_{i=1}^n Z_{1}^{(i)}\leq 0\right]\\
&\equi_{\varepsilon\rightarrow 0} \kappa^n \varepsilon^{\frac{\alpha}{2}}\,  \E\left[ \prod_{i=1}^n  |Z_1^{(i)}|^{\frac{\alpha}{2}} ,\,\sup_{u\in[0, 1]}\prod_{i=1}^n Z_{u}^{(i)}  \leq 1, \,\prod_{i=1}^n Z_{1}^{(i)}\leq 0\right]
\end{align*}
which gives the lower bound.

\subsection{Upper bound for the persistence probabilities}

Since all the processes $(Z^{(i)})$ have the same law, we first have :
$$
\Pb\left(\mathcal{S}_n \leq \varepsilon\right)   = n\, \Pb\left(\sup_{0\leq u\leq 1} \prod_{i=1}^n Z_u^{(i)} \leq \varepsilon,\; g_1^{(n)} \geq \sup_{1\leq i\leq n-1}g_1^{(i)}\right).
$$
To simplify the notation, we shall remove the superscript $^{(n)}$ and denote 
$$X=Z^{(n)},\qquad g_1=g_{1}^{(n)}\qquad \text{ and }\quad  \xi_t = \sup_{1\leq i\leq n-1}g_t^{(i)}.$$
This yields, with the usual convention that empty products equal 1, 
\begin{align*}
\Pb\left(\mathcal{S}_n \leq \varepsilon\right)   &= n\, \Pb\left(\sup_{0\leq u\leq 1} X_u \prod_{i=1}^{n-1} Z_u^{(i)} \leq \varepsilon,\; g_1 \geq\xi_1\right)\\
&\leq n\, \Pb\left(\sup_{0\leq u< g_1}X_u \prod_{i=1}^{n-1} Z_{u}^{(i)} \leq \varepsilon,\; g_1 \geq\xi_1\right)\\
&=2 n\, \Pb\left(\sup_{0\leq u< 1}X_{ug_1}\prod_{i=1}^{n-1} Z_{ug_1}^{(i)} \leq \varepsilon,  \;  \prod_{i=1}^{n-1} Z_{g_1}^{(i)} \geq0,\; g_1\geq \xi_1\right)
\end{align*}
where the last equality follows by symmetry.  
By scaling, we further obtain 
$$
\Pb\left(\mathcal{S}_n \leq \varepsilon\right) \leq 2 n\,\int_0^1  \Pb\left(\sup_{0\leq u< 1} \frac{X_{ur}}{r^{1/\alpha}} \prod_{i=1}^{n-1} Z_{u}^{(i)}  r^{\frac{n}{\alpha}} \leq \varepsilon,  \; \prod_{i=1}^{n-1} Z_{1}^{(i)}\geq0,\;1 \geq \xi_{\frac{1}{r}} \bigg|\; g_1=r  \right)\Pb(g_1\in dr).$$
We set $X^{(x,t,y)}$ for the $\alpha$-stable bridge of length $t$ starting from $x$ and ending at $y$. Notice that when $X=W$ is a Brownian motion, then $g_1$ coincides with the last zero of $W$ before time 1, so that $W_{g_1}=0$ a.s. and it is  well-known that the process $\left(\frac{W_{ug_1}}{\sqrt{g_1}}, \, 0\leq u\leq 1\right)$ is a standard Brownian bridge, independent from $g_1$, see Bertoin \cite[p.230]{Ber}. We shall extend this result to the stable case in the following lemma, whose proof is postponed at the end of the paper.
\begin{lemma}\label{lem}
We set by convention $X_{0^-}=X_0$. Conditionally on the event $\left\{\frac{X_{g_1^-}}{g_1^{1/\alpha}}=a\right\}$, the process
$$\left(\frac{X_{(ug_1)^-}}{g_1^{1/\alpha}},\; 0\leq u\leq 1\right)
$$ 
is independent from $g_1$ and has the same law as the stable bridge $\left(X^{(0,1,a)}_{u^{-}}, 0\leq u\leq 1\right)$. 
\end{lemma}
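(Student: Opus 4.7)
The plan is to use the compensation formula (Lévy system) for the sign-changing jumps of $X$, together with the Markov property and the scaling invariance of the stable bridge. The Brownian case $\alpha=2$ is classical (cited in the paragraph preceding the lemma), so I focus on $\alpha\in(0,2)$, where almost surely $g_1$ is a jump time at which $X$ crosses the origin. Let $\nu$ denote the L\'evy measure of $X$.

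For bounded measurable test functions $F$ on c\`adl\`ag paths and $\psi$ on $(0,1]\times \R$, I would first rewrite
$$\E\!\left[F\!\left(\tfrac{X_{(ug_1)^-}}{g_1^{1/\alpha}},\, 0\le u<1\right)\psi\!\left(g_1,\tfrac{X_{g_1^-}}{g_1^{1/\alpha}}\right)\right]=\E\!\sum_{\substack{s\in(0,1]\\ X_{s^-}X_s<0}} F\!\left(\tfrac{X_{(us)^-}}{s^{1/\alpha}}\right)\psi\!\left(s,\tfrac{X_{s^-}}{s^{1/\alpha}}\right)\mathds{1}_{A_s},$$
where $A_s=\{X\text{ has no sign change on }(s,1]\}$. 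Applying the strong Markov property at the jump time $s$, the indicator $\mathds{1}_{A_s}$ can be replaced inside the expectation by the deterministic quantity $\widehat{\Pb}_{X_{s^-}+y}(\widehat{T}^{\pm}>1-s)$, where $\widehat{T}^{\pm}$ is the first sign-change time of an independent copy $\widehat{X}$ starting from the post-jump value $X_s=X_{s^-}+y$. The predictable compensation formula then turns the sum into an integral $\int_0^1 ds\int_\R\nu(dy)$ involving only the pre-$s$ trajectory of $X$.

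To identify the conditional law, I would condition on $X_{s^-}$ (which equals $X_s$ almost surely for fixed $s$): the law of $(X_u,u\in[0,s])$ given $X_s=a'$ is the stable bridge $X^{(0,s,a')}$. Using the scaling identity $(X^{(0,s,a')}_u,u\in[0,s])\=(s^{1/\alpha}X^{(0,1,a)}_{u/s},u\in[0,s])$ with $a=a'/s^{1/\alpha}$ and changing variables, all $F$-dependence factors cleanly as $\E[F(X^{(0,1,a)}_{u^-},\,0\le u<1)]$, while the remaining factors depend on $(s,a)$ only through $\psi$ and the joint density of $(g_1,X_{g_1^-}/g_1^{1/\alpha})$. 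Since this identification of the conditional law involves no residual dependence on $s$, both the bridge identity and the independence from $g_1$ follow.

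The main obstacle is the rigorous application of the compensation formula despite the non-predictable indicator $\mathds{1}_{A_s}$, which a priori depends on the entire post-$s$ trajectory. The key preliminary step is to use the Markov property to convert $\mathds{1}_{A_s}$ into the predictable (in fact deterministic given $X_{s^-}$ and $y$) hitting probability $\widehat{\Pb}_{X_{s^-}+y}(\widehat{T}^{\pm}>1-s)$; after this, the remaining manipulations are routine applications of Poisson random measure calculus and stable scaling.
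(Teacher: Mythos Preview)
Your approach via the compensation formula is correct and genuinely different from the paper's. The paper proceeds by \emph{time reversal}: conditioning on $X_1=y$, reversing the bridge $X^{(0,1,y)}$ into $X^{(y,1,0)}$ so that the \emph{last} sign change $g_1$ becomes the \emph{first} passage time $T_0$ of the reversed process, and then applying the Markov property at $T_0$ together with the bridge absolute-continuity formula and scaling. Your route avoids the reversal altogether by summing over sign-changing jumps, pushing the post-jump indicator $\mathds{1}_{A_s}$ into a hitting probability via the Markov property, and then compensating; after conditioning on $X_{s^-}$ and scaling, the bridge law and the independence from $g_1$ drop out exactly as you describe.

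One point deserves more care. You assert that for $\alpha\in(0,2)$ the time $g_1$ is almost surely a jump at which $X$ crosses the origin. For $\alpha\le 1$ this is immediate since $\{0\}$ is polar, but for $\alpha\in(1,2)$ the process hits $0$ and the set defining $g_1$ contains the whole (perfect) zero set; one must rule out that $g_1$ coincides with the last zero $\sigma$, where $X_{\sigma^-}=X_\sigma=0$. This does hold---equivalently, the excursion from $0$ straddling time $1$ almost surely changes sign before time $1$---and the cleanest justification is precisely the non-creeping property of the first passage below $0$, which is exactly what the paper's time-reversal argument exploits through the overshoot density $f(y;z,r)$. So your route is more direct once this fact is in hand, while the paper's route establishes it for free as a by-product of working with $T_0$; you should add a line invoking non-creeping (no Gaussian part) to close this gap.
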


Let us denote  by $\rho(da,dr)$ the law of the pair $\left(g_1^{-1/\alpha} X_{g_1^-},\; g_1\right)$. Since the $(Z^{(i)})$ are quasi-left continuous and independent from $X$,  we deduce from Lemma \ref{lem} that 
\begin{align}
\Pb\left(\mathcal{S}_n \leq \varepsilon\right)
\notag &\leq 2 n\, \int_0^{+\infty} \int_0^1  \Pb\left(\sup_{0\leq u\leq 1} X_{u^-}^{(0,1,a)} \prod_{i=1}^{n-1} Z_{u}^{(i)} r^{\frac{n}{\alpha}} \leq \varepsilon,  \; \prod_{i=1}^{n-1} Z_{1}^{(i)}\geq0,\;1 \geq \xi_{\frac{1}{r}} \right)\rho(da,dr)\\
\label{eq:persup}&= 2 n\,  \int_0^{+\infty} \int_0^1  \Pb\left(\sup_{0\leq u\leq 1} X_{u}^{(a,1,0)} \prod_{i=1}^{n-1} Z_{1-u}^{(i)} r^{\frac{n}{\alpha}} \leq \varepsilon,  \; \prod_{i=1}^{n-1} Z_{1}^{(i)}\geq0,\;1 \geq \xi_{\frac{1}{r}} \right)\rho(da,dr) 
\end{align}
where the equality follows from the time-reversal property of stable bridges. We shall now decompose the right-hand side of this inequality according as $\{a\leq1\}$ or  $\{a>1\}$.

\subsubsection{The case $\{a\leq 1\}$}
We start with the term giving the main contribution. Let us denote by $p_t$ the density of the random variable $X_t$, and recall that it is even, and decreasing on $(0,+\infty)$. Using the absolute continuity formula of the stable bridge, we get :
 \begin{align}
\notag &\iint_0^1  \Pb\left(\sup_{0\leq u\leq 1} X_{u}^{(a,1,0)} \prod_{i=1}^{n-1} Z_{1-u}^{(i)} r^{\frac{n}{\alpha}} \leq \varepsilon,  \; \prod_{i=1}^{n-1} Z_{1}^{(i)}\geq0,\;1 \geq \xi_{\frac{1}{r}} \right)\rho(da,dr)  \\
\notag &\qquad \leq \iint_0^1 \E\left[\frac{p_{\frac{1}{2}}(a+X_{\frac{1}{2}})}{p_1(a)}    ,\,\sup_{0\leq u\leq 1/2}(a+ X_u)\prod_{i=1}^{n-1} Z_{1-u}^{(i)}r^{\frac{n}{\alpha}} \leq \varepsilon,  \; \prod_{i=1}^{n-1} Z_{1}^{(i)}\geq0,\;1 \geq \xi_{\frac{1}{r}} \right]\rho(da,dr) \\
\label{eq:In}&\qquad \leq \frac{p_{\frac{1}{2}}(0)}{p_1(1)} \,\iint_0^1  \Pb\left(\sup_{0\leq u\leq 1/2}(a+ X_u)\prod_{i=1}^{n-1} Z_{1-u}^{(i)}r^{\frac{n}{\alpha}} \leq \varepsilon,  \; \prod_{i=1}^{n-1} Z_{1}^{(i)}\geq0,\;1 \geq\xi_{\frac{1}{r}} \right)\rho(da,dr).
\end{align} 
We now study the integrand in (\ref{eq:In}). Recall that $X$ admits the representation $(B_{\tau_u}, u\geq0)$ where $B$ is a standard Brownian motion and $\tau$ a stable subordinator with index $\frac{\alpha}{2}$ independent from $B$. Let us consider the conditional expectation :
$$\Pb\left(\sup_{0\leq u\leq 1/2} (a+B_{\eta_u}) \prod_{i=1}^{n-1} \omega_{1-u}^{(i)} r^{\frac{n}{\alpha}} \leq \varepsilon\bigg|  \begin{array}{cc}
Z^{(i)}=\omega^{(i)},&\; \tau=\eta\\
1\leq i\leq n-1&
\end{array}  \right) $$
where $\eta$ and $(\omega^{(i)}, 1\leq i \leq n-1)$  are some fixed c\`adl\`ag paths. We apply Slepian's lemma with the Gaussian processes
$$U_u =z\prod_{i=1}^{n-1} \omega_{u}^{(i)}+B_{\eta_u} \prod_{i=1}^{n-1} \omega_{u}^{(i)} \qquad \text{ and } \qquad V_u= a\prod_{i=1}^{n-1} \omega_{u}^{(i)}+B_{\eta_u} \prod_{i=1}^{n-1} |\omega_{u}^{(i)}|$$
which satisfy for every $0\leq u\leq s\leq \frac{1}{2}$, 
$$\E[U_u]= \E[V_u],\qquad \E\left[U_u^2\right]= \E\left[V_u^2\right] \quad\text{and}\quad \E[U_uU_s]\leq \E[V_uV_s].$$
This yields, using the tower property of conditional expectations :
\begin{multline*}
\Pb\left(\sup_{0\leq u\leq 1/2} (a+X_{u})\prod_{i=1}^{n-1} Z_{1-u}^{(i)}r^{\frac{n}{\alpha}} \leq \varepsilon, \; \prod_{i=1}^{n-1} Z_{1}^{(i)} \geq0,\;1 \geq \xi_{\frac{1}{r}}  \right) \\
\leq \Pb\left(\sup_{0\leq u\leq 1/2} \left(a\prod_{i=1}^{n-1} Z_{1-u}^{(i)}+ X_{u} \prod_{i=1}^{n-1} |Z_{1-u}^{(i)}| \right)r^{\frac{n}{\alpha}} \leq \varepsilon,\;\prod_{i=1}^{n-1} Z_{1}^{(i)} \geq0, \;1 \geq \xi_{\frac{1}{r}}  \right).
\end{multline*}
Observe next that, by taking $u=0$, this quantity is null as soon as $a\prod_{i=1}^{n-1} Z_{1}^{(i)} r^{\frac{n}{\alpha}}\geq \varepsilon$. Therefore, denoting  $\theta_{\frac{1}{2}} = \mathop{\text{Argmax}}\limits_{0\leq u\leq 1/2}X_u$, we  may replace the supremum by its value at $\theta_{\frac{1}{2}}$ to get the bound
\begin{equation}\label{eq:boundtheta}
\Pb\left( \bigg(a\prod_{i=1}^{n-1} Z_{1-\theta_{\frac{1}{2}}}^{(i)}+ X_{\theta_{\frac{1}{2}}} \prod_{i=1}^{n-1} |Z_{1-\theta_{\frac{1}{2}}}^{(i)}| \bigg)r^{\frac{n}{\alpha}} \leq \varepsilon,\;  \varepsilon\geq a\, r^{\frac{n}{\alpha}} \prod_{i=1}^{n-1} Z_{1}^{(i)} \geq0, \;1 \geq \xi_{\frac{1}{r}}  \right).
\end{equation}
We further decompose this integral according to the sign of $\prod_{i=1}^{n-1} Z_{1-\theta_{\frac{1}{2}}}^{(i)}$.  
\begin{enumerate}[$i)$]
\item When $\prod_{i=1}^{n-1} Z_{1-\theta_{\frac{1}{2}}}^{(i)}\geq0$, the expression (\ref{eq:boundtheta}) is smaller than 
\begin{equation}\label{eq:Hn+}
\Pb\left( X_{\theta_{\frac{1}{2}}} \prod_{i=1}^{n-1} |Z_{1-\theta_{\frac{1}{2}}}^{(i)}| r^{\frac{n}{\alpha}} \leq \varepsilon,\;1 \geq \xi_{\frac{1}{r}}     \right)=:I_n(r, \varepsilon) . 
\end{equation}
\item When $\prod_{i=1}^{n-1} Z_{1-\theta_{\frac{1}{2}}}^{(i)}\leq0$, the situation is slightly more complex. We have
\begin{align*}
 &    \Pb\left( (X_{\theta_{\frac{1}{2}}}-a) \prod_{i=1}^{n-1} |Z_{1-\theta_{\frac{1}{2}}}^{(i)}|r^{\frac{n}{\alpha}} \leq \varepsilon,\;  \varepsilon \geq a\,r^{\frac{n}{\alpha}}\prod_{i=1}^{n-1} Z_{1}^{(i)} \geq0,\;1 \geq \xi_{\frac{1}{r}} \right)  \\
& \qquad\quad \leq      \Pb\left(X_{\theta_{\frac{1}{2}}} \prod_{i=1}^{n-1} |Z_{1-\theta_{\frac{1}{2}}}^{(i)}| r^{\frac{n}{\alpha}} \leq \varepsilon\left(1 +\frac{\prod_{i=1}^{n-1} |Z_{1-\theta_{\frac{1}{2}}}^{(i)}|}{\prod_{i=1}^{n-1} Z_{1}^{(i)}}   \right)  ,\prod_{i=1}^{n-1} Z_{1}^{(i)} \geq0,\;1 \geq \xi_{\frac{1}{r}}  \right)   \\
 &\qquad \qquad\leq I_n(r,2\varepsilon) + \Pb\left(X_{\theta_{\frac{1}{2}}}\prod_{i=1}^{n-1} |Z_{1}^{(i)}| r^{\frac{n}{\alpha}} \leq 2\varepsilon,\;1 \geq \xi_{\frac{1}{r}}   \right)=: I_n(r,2 \varepsilon) + J_n(r,2 \varepsilon)
\end{align*}
\end{enumerate}
where we have used in the last line the inequality : $1_{\{x\leq \varepsilon(a+b)\}} \leq 1_{\{x\leq 2a\varepsilon\}} + 1_{\{x\leq 2b\varepsilon\}}$. Going back to (\ref{eq:In}), we are thus led to study the asymptotics of
$$  \int_0^1  (I_n(r,\varepsilon)+J_n(r,\varepsilon))\Pb(g_1\in dr).$$
We start with  $I_n(r,\varepsilon)$ which will give the main contribution. From Lemma \ref{lem:g}, we may choose 
$\delta \in(0,1)$ small enough such that 
\begin{equation}\label{eq:infg}
\forall r\leq \delta,\qquad \Pb(g_1\in dr)/dr \leq \frac{c}{\sqrt{r}}
\end{equation}
for some constant $c>0$.  On the one hand, when $r\geq \delta$, we obtain since $\theta_{\frac{1}{2}}\leq \frac{1}{2}$ :
$$
\int_{\delta}^1 I_n(r, \varepsilon)\Pb\left(g_1\in dr   \right)\leq \Pb\left( X_{\theta_{\frac{1}{2}}} \prod_{i=1}^{n-1} |Z_{1}^{(i)}| \delta^{n/\alpha}  \leq 2^{\frac{n-1}{\alpha}}  \varepsilon    \right)
 \equi_{\varepsilon\rightarrow +\infty} \begin{cases}
   \kappa \, \varepsilon^{\frac{\alpha}{2}} \quad \text{if } \alpha\in(0,2),\\
   \kappa \, \varepsilon \left|\ln(\varepsilon)\right|^{n-1} \quad \text{if } \alpha=2.\\
\end{cases}
$$
On the other hand, when $r\leq \delta$, we deduce from the  Markov property at time 1 that  :
\begin{equation*}
I_n(r,\varepsilon) \leq \E\left[ \prod_{i=1}^{n-1}\widehat{\Pb}^{(i)}_{1}\left(|Z_1^{(i)}|^\alpha\widehat{T}_{0}^{(i)} \geq\frac{1-\delta}{r}\right),X_{\theta_{\frac{1}{2}}} \prod_{i=1}^{n-1} |Z_{1-\theta_{\frac{1}{2}}}^{(i)}| r^{\frac{n}{\alpha}} \leq \varepsilon\right].
\end{equation*}
Using the identity $Z_1^{(i)} \=Z_{1-\theta_{\frac{1}{2}}}^{(i)} +\theta_{\frac{1}{2}}^{\frac{1}{\alpha}}  Y_{1}^{(i)}$ where $Y_1^{(i)}$ is a copy of $Z_1^{(i)}$,  independent from the processes $(Z^{(i)})$ and $(\widehat{Z}^{(i)})$, we then obtain the bound \begin{equation}\label{eq:borneIn}
I_n(r,\varepsilon) \leq\E\left[  \prod_{i=1}^{n-1}\widehat{\Pb}^{(i)}_{1}\left( \Big( |Z_{1-\theta_{\frac{1}{2}}}^{(i)}|^\alpha  + |Y_1^{(i)}|^\alpha\Big)\widehat{T}_{0}^{(i)} \geq\frac{1-\delta}{ 2r}\right),\, X_{\theta_{\frac{1}{2}}} \prod_{i=1}^{n-1} |Z_{1-\theta_{\frac{1}{2}}}^{(i)}|  r^{\frac{n}{\alpha}} \leq  1     \right]
 \end{equation}
 where we have used the classic inequality $|x+y|^\alpha \leq 2 (|x|^{\alpha} + |y|^{\alpha})$ since $\alpha\in(0,2]$.
We further assume that $\delta$ is taken small enough so that, from Lemma \ref{lem:XY} and the asymptotics (\ref{eq:limT0}), we have  
 \begin{equation}\label{eq:limY}
 \forall r\leq \delta, \qquad \E\left[\widehat{\Pb}^{(i)}_{1}\left( \Big( |Z_{1-\theta_{\frac{1}{2}}}^{(i)}|^\alpha\vee 1  + |Y_1^{(i)}|^\alpha\Big)\widehat{T}_{0}^{(i)} \geq\frac{1-\delta}{ 2r}\right)\right] \leq \kappa \sqrt{r}
 \end{equation}
 for some positive constant $\kappa$, and where $a\vee b = \max(a,b)$.  We shall now proceed by iteration.
\begin{enumerate}[$i)$]
\item If $|Z_{1-\theta_{\frac{1}{2}}}^{(n-1)}| \geq 1 $, then, we may remove $|Z_{1-\theta_{\frac{1}{2}}}^{(n-1)}|$ from the second product in (\ref{eq:borneIn}), and deduce from (\ref{eq:limY}) that $I_n(r,\varepsilon)$ is smaller than  
$$ \kappa\,\sqrt{r}\,  \E\left[  \prod_{i=1}^{n-2}\widehat{\Pb}^{(i)}_{1}\left( \Big( |Z_{1-\theta_{\frac{1}{2}}}^{(i)}|^\alpha  + |Y_1^{(i)}|^\alpha\Big)\widehat{T}_{0}^{(i)} \geq\frac{1-\delta}{ 2r}\right),\, X_{\theta_{\frac{1}{2}}} \prod_{i=1}^{n-2} |Z_{1-\theta_{\frac{1}{2}}}^{(i)}|  r^{\frac{n}{\alpha}} \leq  1     \right]. 
$$
\item If $|Z_{1-\theta_{\frac{1}{2}}}^{(n-1)}| \leq 1 $, then, we may replace $|Z_{1-\theta_{\frac{1}{2}}}^{(n-1)}|$ by 1 in the first product in (\ref{eq:borneIn}), and deduce, still from (\ref{eq:limY}), that $I_n(r,\varepsilon)$ is smaller than $$\kappa\,\sqrt{r} \, \E\left[  \prod_{i=1}^{n-2}\widehat{\Pb}^{(i)}_{1}\left( \Big( |Z_{1-\theta_{\frac{1}{2}}}^{(i)}|^\alpha  + |Y_1^{(i)}|^\alpha\Big)\widehat{T}_{0}^{(i)} \geq\frac{1-\delta}{ 2r}\right),\, X_{\theta_{\frac{1}{2}}} \prod_{i=1}^{n-1} |Z_{1-\theta_{\frac{1}{2}}}^{(i)}|  r^{\frac{n}{\alpha}} \leq  1     \right]. $$
\end{enumerate}
Iterating the procedure, we obtain that $I_n(r,\varepsilon)$ may be bounded by a sum of $2^{n-1}$ terms :
$$I_n(r,\varepsilon) \leq  \kappa\, r^{\frac{n-1}{2}}  \sum_{\Delta\subset \{1,\ldots, n-1\} }\Pb\left(X_{\theta_{\frac{1}{2}}}    \prod_{i\in\Delta} |Z_{1-\theta_{\frac{1}{2}}}^{(i)} |r^{\frac{n}{\alpha}} \leq 1 \right)
$$
where the sum is taken over all the  subsets of  $\{1,2,\ldots, n-1\}$ (including the empty set). The change of variable $\varepsilon x = r^{\frac{n}{\alpha}}$ and the estimate (\ref{eq:infg}) yield then the upper bound 
\begin{equation}\label{eq:InD}
\int_0^{\delta}I_n(r,\varepsilon) \Pb(g_1\in dr) \leq  \kappa \,\varepsilon^{\frac{\alpha}{2}}  \sum_{\Delta\subset \{1,\ldots, n-1\} }\int_0^{\frac{\delta^{n/\alpha}}{\varepsilon}} 
 \Pb\left( X_{\theta_{\frac{1}{2}}} \prod_{i\in \Delta} |Z_{1}^{(i)}|  x \leq  2^{\frac{n-1}{\alpha}}     \right)
x^{\frac{\alpha}{2}-1}dx
\end{equation}
and it remains to study the asymptotics of the integrands. From Lemma \ref{lem:XY} and (\ref{eq:asymp}), we deduce that
\begin{enumerate}[$\rightarrow$]
\item when $\alpha\in (0,2)$ all the terms have the same contribution :
$$\Pb\left( X_{\theta_{\frac{1}{2}}} \prod_{i\in \Delta} |L_{1}^{(i)}|  x \leq  2^{\frac{n-1}{\alpha}}     \right)\equi_{x\rightarrow+\infty} c \left(\frac{1}{x}\right)^{\alpha/2}
$$
\item while, for $\alpha=2$, they depend on the cardinal of $\Delta$ :
$$ \Pb\left( X_{\theta_{\frac{1}{2}}} \prod_{i\in \Delta} |W_{1}^{(i)}|  x \leq  2^{\frac{n-1}{2}}     \right)\equi_{x\rightarrow+\infty} c\, \frac{(\ln(x))^{|\Delta|}}{x}.
$$
\end{enumerate}
Plugging these expressions in (\ref{eq:InD}) finally gives the announced upper bound.\\

The study of the asymptotics of $J_n(r,\varepsilon)$ follows the same pattern of proof, except that we do not need to introduce the random variables $(Y_1^{(i)})$. Indeed, when $r\geq \delta$, we get the same asymptotics bound while  for $r\leq \delta$   we obtain, applying the Markov property :
$$J_n(r,\varepsilon)\leq   \E\left[  \prod_{i=1}^{n-1}\widehat{\Pb}^{(i)}_{1}\left(|Z_1^{(i)}|^\alpha\widehat{T}_{0}^{(i)} \geq\frac{1-\delta}{ r}\right),\, X_{\theta_{\frac{1}{2}}} \prod_{i=1}^{n-1} |Z_{1}^{(i)}|  r^{\frac{n}{\alpha}} \leq  1     \right].
$$
Using the decompositions $|Z_1^{(i)}| \leq 1$ (resp. $|Z_1^{(i)}| \geq 1$) and following the same steps as for $I_n(r,\varepsilon)$, we deduce that  
\begin{multline}\label{eq:Jn}
\int_0^{\delta }J_n(r,\varepsilon) \Pb(g_1\in dr)\\
\leq \kappa\,  \varepsilon^{\frac{\alpha}{2}}  \sum_{\Delta\subset \{1,\ldots,n-1\}} \int_0^{\frac{\delta^{n/\alpha}}{\varepsilon}} 
   \E\left[ \prod_{i\in \Delta} |Z_1^{(i)}|^{\frac{\alpha}{2}},\;   X_{\theta_{\frac{1}{2}}} \prod_{i\in \Delta} |Z_{1}^{(i)}|  x \leq  2^{\frac{n-1}{\alpha}}     \right]
x^{\frac{\alpha}{2}-1}dx
\end{multline}
and, as $\varepsilon\rightarrow 0$, all the terms on the right-hand side have the same asymptotics :   $\varepsilon^{\frac{\alpha}{2}} \left|\ln(\varepsilon)\right|$.\\

 \subsubsection{The case $\{a \geq 1\}$}
Starting back from (\ref{eq:persup}), we first bound the supremum by its value at $u=0$~:
\begin{align*}
&\int_1^{+\infty} \int_0^1  \Pb\left(\sup_{0\leq u\leq 1} X_{u}^{(a,1,0)} \prod_{i=1}^{n-1} Z_{1-u}^{(i)} r^{\frac{n}{\alpha}} \leq \varepsilon,  \; \prod_{i=1}^{n-1} Z_{1}^{(i)}\geq0,\;1 \geq \xi_{\frac{1}{r}} \right)\rho(da,dr)  \\
&\qquad \qquad\leq\int_1^{+\infty} \int_0^1  \Pb\left(a \prod_{i=1}^{n-1} Z_{1}^{(i)} r^{\frac{n}{\alpha}} \leq \varepsilon,  \; \prod_{i=1}^{n-1} Z_{1}^{(i)}\geq0,\;1 \geq \xi_{\frac{1}{r}} \right)\rho(da,dr) \\
&\qquad\qquad \leq \int_0^1  \Pb\left( \prod_{i=1}^{n-1} |Z_{1}^{(i)}| r^{\frac{n}{\alpha}} \leq \varepsilon,\;1 \geq \xi_{\frac{1}{r}} \right)\Pb\left(g_1\in dr   \right). 
\end{align*} 
The study of this last expression will be similar to that of $J_n(r,\varepsilon)$, replacing $X_{\theta_{\frac{1}{2}}}$ by 1. Indeed, on the one hand, we first deduce from Lemma \ref{lem:XY}, taking $\delta$ small enough as before, that :
\begin{equation*}
 \int_{\delta}^1  \Pb\left( \prod_{i=1}^{n-1} |Z_{1}^{(i)}| r^{\frac{n}{\alpha}} \leq \varepsilon, \;1 \geq \xi_{\frac{1}{r}} \right)\Pb\left(g_1\in dr   \right) 
 \leq \Pb\left( \prod_{i=1}^{n-1} |Z_{1}^{(i)}|  \delta^{\frac{n}{\alpha}} \leq  \varepsilon \right)\equi_{\varepsilon\rightarrow 0}  \kappa\,  \varepsilon \left|\ln(\varepsilon)\right|^{n-2}.
\end{equation*}
On the other hand, for $r\leq \delta$, we deduce, as for (\ref{eq:Jn}), that 
\begin{multline*}
\int_0^{\delta}  \Pb\left( \prod_{i=1}^{n-1} |Z_{1}^{(i)}| r^{\frac{n}{\alpha}} \leq \varepsilon,\;1 \geq \xi_{\frac{1}{r}} \right)\Pb\left(g_1\in dr   \right) \\
\leq  \kappa\,  \varepsilon^{\frac{\alpha}{2}}  \sum_{\Delta\subset \{1,\ldots,n-1\}} \int_0^{\frac{\delta^{n/\alpha}}{\varepsilon}} 
   \E\left[ \prod_{i\in \Delta} |Z_1^{(i)}|^{\frac{\alpha}{2}},\;   \prod_{i\in \Delta} |Z_{1}^{(i)}|  x \leq  2^{\frac{n-1}{\alpha}}     \right]
x^{\frac{\alpha}{2}-1}dx.\end{multline*}
When $\varepsilon\rightarrow 0$, all the integrals on the right-hand side are finite, hence we obtain the asymptotics  $\varepsilon^{\frac{\alpha}{2}}$
which is  negligible. \qed

\section{Proof of Lemma \ref{lem}}

\begin{proof}
This lemma being classic for Brownian motion, we assume that $\alpha\in(0,2)$. Let $0< s\leq t\leq 1$ and take $F$ a positive functional. Let us denote by $f(y; z,r)$ the probability density function of $(X_{T_0}, T_0)$ when $X_0=y$. By symmetry and time reversal, we first have 
\begin{multline*}
\E\left[F\left(\frac{X_{u^-}}{g_1^{1/\alpha}}, s\leq u\leq g_1\right) 1_{\{g_1\geq t\}}   \right] \\
=2\int_0^{+\infty} \E^{(y,1,0)}\left[F\left(\frac{X_{u}}{(1-T_0)^{1/\alpha}}, T_0\leq u\leq 1-s \right) 1_{\{T_0\leq 1-t\}} \right] p_1(y) dy.
\end{multline*}
The absolute continuity formula for the stable bridge as well as the Markov property then yield
\begin{align*}
&2\int_0^{+\infty}\E_y\left[p_s(X_{1-s})  F\left(\frac{X_{u}}{(1-T_0)^{1/\alpha}}, T_0\leq u\leq 1-s \right) 1_{\{T_0\leq 1-t\}} \right]  dy\\
&= 2\int_0^{+\infty} \int_{-\infty}^0 \int_0^{1-t}  \E_z\left[p_s(X_{1-s-r})  F\left(\frac{X_{u}}{(1-r)^{1/\alpha}}, 0\leq u\leq 1-s-r \right) \right]  f(y; z,r) dydzdr
\end{align*}
Next, by scaling and using that $t^{1/\alpha}p_t(z) = p_1\left(\frac{z}{t^{1/\alpha}}\right)$, 
\begin{align*}
&2\int_0^{+\infty} \int_{-\infty}^0 \int_0^{1-t}  \E_{\frac{z}{(1-r)^{1/\alpha}}}\left[p_s((1-r)^{1/\alpha}X_{1-\frac{s}{1-r}})  F\left(X_{\frac{u}{1-r}}, 0\leq u\leq 1-s-r \right) \right]  f(y; z,r) dydzdr\\
&\quad= 2 \int_0^{+\infty} \int_{-\infty}^0\int_0^{1-t}  \E_{a}\left[p_{\frac{s}{1-r}}(X_{1-\frac{s}{1-r}})  F\left(X_{u}, 0\leq u\leq 1-\frac{s}{1-r} \right) \right]  f(y; a (1-r)^{1/\alpha},r) dy da dr\\
&\quad= 2 \int_0^{+\infty}\int_{-\infty}^0 \int_0^{1-t}  \E^{(a,1,0)}\left[F\left(X_{u}, 0\leq u\leq 1-\frac{s}{1-r} \right) \right]p_1(a)  f(y; a (1-r)^{1/\alpha},r) dy  da dr\\
&\quad= 2 \int_{-\infty}^0 da\, \E^{(0,1,a)}\left[F\left(X_{u^-}, \frac{s}{1-r}\leq u\leq 1 \right) \right]p_1(a)  \int_0^{+\infty}  \int_0^{1-t} f(y; a (1-r)^{1/\alpha},r) dy  dr
\end{align*}
where in the second line we have used the change of variable $z=a(1-r)^{1/\alpha}$. 
Letting $s\rightarrow0$, we finally deduce that 
\begin{multline*}
\E\left[F\left(\frac{X_{u^-}}{g_1^{1/\alpha}}, 0\leq u\leq g_1\right) 1_{\{g_1\geq t\}}  \right]\\
=2 \int_{-\infty}^{0} da\,\E^{(0,1,a)}\left[F\left(X_{u^-},0\leq u\leq 1 \right) \right]p_1(a)  \int_0^{+\infty}  \int_0^{1-t} f(y; a (1-r)^{1/\alpha},r) dy  dr\end{multline*}
which proves Lemma \ref{lem}.

\end{proof}

\end{document}